\documentclass[12pt,article,noinfoline]{imsart}
\usepackage{graphicx,enumerate}

\usepackage{jr2,comment}




\newtheorem{theorem}{Theorem}
\newtheorem{lemma}{Lemma}
\newtheorem{Assumption}{Assumption}

\newtheorem{example}{Example}
\startlocaldefs

\def\boldsymbol#1{\setbox\ewb\hbox{$#1$}%
    \setlength{\deno}{-\wd\ewb+0.05em}{ #1}\hspace{\deno}{#1}}
\endlocaldefs

\begin{document}

\baselineskip=23pt

\title{Consistent Empirical Bayes Estimation  of the Mean of a Mixing Distribution with Applications to
Treatment of Nonresponse}

\begin{center}
{ {\bf \large  Eitan Greenshtein}

Central Bureau of Statistics, Israel

eitan.greenshtein@gmail.com
} 
\end{center}
{\bf Abstract}

We consider a Nonparametric  Empirical Bayes  (NPEB) framework. Let  $Y_i$ be random variables, $Y_i \sim f(y|\theta_i)$, $i=1,...,n$, where
$\theta_i \sim G$, and $\theta_i \in \Theta$ are independent. The variables $Y_i $ are conditionally  independent
given $\theta_i, \; i=1,...,n$.
The mixing distribution  $G$ is unknown   and  assumed to belong  to a nonparametric class
 $\{G \}$.

Let $\eta(\theta)$  be a function of $\theta$. We address the problem of consistently estimating  $E_G \eta(\theta) \equiv \eta_G$. This problem becomes
particularly challenging when $G$ cannot be consistently estimated from the observed data.

We motivate this problem, especially in contexts involving nonresponse and missing data. 
For such  cases, a consistent estimation method is suggested and its performance is demonstrated  through simulations.

\bigskip
 
\section{Introduction}

Our goal is to utilize Nonparametric  Empirical Bayes (NPEB) methods to estimate  the means of mixtures,  particularly in contexts involving sampling with nonresponse.
Let $\theta_i \sim G$, $i=1,...,n,$ be i.i.d., let  $Y_i \sim f(y|\theta_i), \; \theta_i \in \Theta$. Here $f(y|\theta_i) $  is the density of the distribution of $Y_i$ under $\theta_i$  with respect to  some dominating  measure $\mu$.
The variables $Y_i$ are independent conditional on $\theta_i$, for $i=1,...,n$.

Let $$\eta_{G}= E_G \eta(\theta).$$

Given an estimator $\hat{G}$ for $G$, a natural  estimator  for $\eta_G$,  is:
$$ \eta_{\hat{G}}= E_{\hat{G}} \eta(\theta).$$

Given observations $t(Y_1),...,t(Y_n)$, our default estimator $\hat{G} \equiv \hat{G}^n$ for $G$ 
is a Generalized Maximum Likelihood Estimator (GMLE), also referred to as 
 Nonparametric Maximum Likelihood Estimator (NPMLE), as  proposed by Kiefer
and Wolfowitz (1956) (see definition in the sequel). Here we write $t(Y_i)$ since we do not necessarily observe
$Y_i$, e.g., we observe  a truncated or censored version of $Y_i$. To simplify notations, especially
when we present the general theory, we will often just write $Y_i$ when there is no confusion.

 In what follows, consistency  of  a sequence of estimators $\hat{G}^n$ for $G$, is understood in the sense of  almost sure (a.s.) weak convergence;
we write $\hat{G}^n \Rightarrow G$.
In many cases, no consistent sequence  of estimators  $\hat{G}^n$ for $G$ exists, and the task of consistently estimating
$\eta_G$ appears infeasible.
 
However, if there were a sequence of estimators $\hat{G}^n$ that converged weakly to $G$ almost surely, then the
corresponding  sequence
of estimators $\eta_{\hat{G}}$ would be consistent for $\eta_G$, provided $\eta(\theta)$ is continuous
and bounded. Specifically, $\hat{G}^n \Rightarrow G$, implies
 $$\eta_{\hat{G}^n} \rightarrow_{a.s.} \eta_G.$$ 

\bigskip


\bigskip

In the following simple example, we demonstrate the  difficulty  and a possible remedy for the 
nonexistence of a definite  sequence $\hat{G}^n$ that converges weakly to $G$.

\begin{example} \label{ex:first}
Let  $\theta_i \sim G$ be i.i.d, $Y_i \sim Bernoulli(\theta_i)$, $\theta_i \in [0,1]$, $i=1,...,n$.  Suppose
the order statistics of the observations is $Y_{(1)}=...=Y_{(n/2)}=0, Y_{(n/2+1)}=...= Y_{(n)}=1$.

It can be shown that  both the following $\hat{G}_1$ and  $\hat{G}_2$ are GMLE (to be defined)
for this dataset. 

$\bullet$ Let $\hat{G}_1$ satisfy $P_{\hat{G}_1}(\theta=0.5)=1$. 

$\bullet$ Let  $\hat{G}_2$ satisfy $P_{\hat{G}_2}(\theta=0.75)=P_{\hat{G}_2}(\theta=0.25)=0.5.$

For the  function $\eta(\theta)=\theta^2$, we find that:
$$ E_{\hat{G}_1}\eta(\theta)=0.25 \neq E_{\hat{G}_2}\eta(\theta)=0.312.$$ 

However, for  the function $\eta(\theta)=\theta$, we have: 
$$ E_{\hat{G}_1} \eta(\theta)= E_{\hat{G}_2} \eta(\theta)=0.5. $$
\end{example}

A key difference between the above two functions $\eta(\theta)$  is that
in the case $\eta(\theta)=\theta$ we have  $\eta(\theta)= E_{\theta} h(Y)$ for $h(Y)=Y$.
Hence  $E_G\eta(\theta)=E_{G}  h(Y)$, in which case a natural unbiased estimator of $\eta_G$
is  
\begin{equation}
 \frac{1}{n} \sum_i h(Y_i).  \label{eqn:missing}
\end{equation}
Thus, in situations where $\eta_G=E_G h(Y)$, we   have the above natural estimator, which does not involve
estimating $G$. Greenshtein and Ritov (2022)  show that the above estimator and the estimator $\eta_{\hat{G}}$ are identical
in a more general setup.

The main focus of this paper is the estimation of $$E_G \eta(\theta)=E_{G}  h(Y)$$ in situations
where there are Missing Not At Random (MNAR) observations among $Y_i, i=1,...,n$. 
 In such cases the naive estimator (\ref{eqn:missing}) 
(applied to the remaining observations)
may be inconsistent and unreliable.
We  demonstrate that estimators of the form $$E_{\hat{G}}  \eta(\theta) = \eta_{\hat{G}}$$ often
remain consistent, even when $G$ cannot be consistently estimated, and there are MNAR observations $Y_i$.

The problem of estimating  $\eta_G$, the mean of $\eta(\theta)$ under the mixture $G$, has scarcely  been studied in the 
Empirical Bayes literature. It has been examined in 
Greenshtein and Itskov (2018) and in Greenshtein and  Ritov (2022). In those papers,
we were intrigued by the surprisingly strong performance of 
the estimator $\eta_{\hat{G}}$ in situations where $G$ cannot be consistently estimated and is 
nonidentifiable. Understanding this phenomenon  motivates   the present study.

A related line of research exists in econometrics;  see, e.g.,  Dobronyi, Gu, and Kim (2021). In their work, the goal is to estimate functionals of a latent
distribution of latent  variables (analogous to our mixture distribution of the parameters, 
which are `latent variables'), in a dynamic  panel logit model. Their approach  involves  estimating moments of the latent distribution, and leveraging the relationship between moments of a distribution and the distribution itself (the moment problem).  They also report  surprisingly good  performance of estimates of
functionals of the unknown distribution of  the latent variables, in cases where that distribution is nonidentifiable from the observed data.

In cases  with  MNAR observations, the available data  are typically  not representative of the population due to a selection bias. This phenomenon also occurs in observational studies.
The NPEB approach in the current paper is designed  to correct such   bias. The papers  by Eckles, Ignatiadis, et al.  (2025), and by Robbins and Zhang (1991) (see also references therein), apply Empirical Bayes ideas to handle  selection bias due to ``Regression Discontinuity Designs". As an example, consider a clinical trial where it is desired
to estimate the treatment effect of a new drug. However, suppose that only ``difficult cases" are assigned to the treatment-group.  

Let $I_i$ be an indicator of the event ``a response was obtained from individual $i$''.
 In this paper, similarly to the previously mentioned papers, given a particular function $h$, a main idea is the following. Conditional on the
unobserved parameters $\theta_i$,  it is assumed that $E(h(Y_i)|I_i)$ and $I_i$ are independent. 

The paper of Vardi (1985) has a similar motivation of handling selection bias, see also Turnbull (1976).
However, in both of those papers the goal is to estimate the marginal distribution of $Y$, while our 
more modest goal
 is to estimate  certain functionals of that distribution.
The former handles selection bias   in a general context, the latter is motivated by interval censoring in survival
analysis. Both, essentially use GMLE. We consider a selection bias that is due to non-response. The parametrization and  setup in
the present paper differs substantially.

 The
additional main point established in the present  paper, is that although $G$ is non-identifiable, a meaningful inference may still be conducted.

\bigskip

{\bf Basic Definitions}


Given a mixing  distribution $G$ and a dominated family of distributions with densities {\nolinebreak$\{ f(y\mid \theta):\; \theta \in \Theta \}$} with respect to some dominating measure $\mu$, define
$$ f_G(y)= \int f(y\mid \vartheta) dG(\vartheta).$$

{\bf GMLE ( Generalized Maximum Likelihood  Estimator)}. Given observations $Y_1,...,Y_n$,
a {\it GMLE} $\hat{G}$ for $G$ (Kiefer and Wolfowitz, 1956) is defined  as any $\hat{G}$ satisfying:
\begin{equation} \label{eq:hatG} \hat{G}= \argmax_{G \in \{G\}} \; \Pi_{i=1}^n f_{{G}}(Y_i). \end{equation}
The maximization is taken  over  all probability distributions $\{G\}$ on $\Theta$.

{\bf Identifiability.} Under the above setup, we say that the model has  {\it identifiability}, if for every
$G_1 \in \{G\}$  and $G_2 \in \{G\}$, $$f_{G_1}=f_{G_2} \Rightarrow \; G_1=G_2.$$

Revisiting  Example \ref{ex:first}. In light of these definitions, it may be seen that in this 
example, any two distributions $G_1, G_2$  with the same  mean cannot be distinguished. Consequently, regardless of the 
sample size, the true distribution $G$ cannot be consistently estimated.

\subsection{Empirical Bayes: a brief review.}
The idea of Empirical Bayes was suggested by Robbins (see  Robbins 1953, 1956, 1964).
In   Parametric Empirical Bayes, the prior (or mixing distribution) $G$ is assumed to belong to a parametric family of distributions, and the task
is to estimate $G$ based on the observed $Y_1,...,Y_n$.
For example, consider the case  $$f(y|\theta)=N(\theta,1), \; \theta \in \Theta \subseteq \R,$$ 
 with $\{G\}=\{N(0,\sigma^2), \; \sigma^2 \in \R_+ \}$, where $\sigma^2$ is unknown. 

 In Nonparametric 
Empirical Bayes  the prior $G$ is allowed to be any distribution on the parameter set $\Theta$.

Two main approaches exist: 

{\bf 1. }  {\bf  $f$-modelling approach:} This approach estimates $f_{G}(y)$-the marginal density of $Y$, indirectly without estimating $G$. 

{\bf 2.} {\bf   $G$-modelling  approach:} This approach estimates $G$ directly. 

See Efron (2014) for 
further discussion  of these two approaches. 
 For reviews on Empirical Bayes and its applications see
 e.g.,  Efron ( 2010), and   
Zhang ( 2003).

In nonparametric  $G$-modelling, $G$ is typically estimated using GMLE. Traditionally 
the computation of  GMLE was carried out via the EM-algorithm, see Laird (1978). More recently, Koenker and Mizera (2014) proposed modern convex  optimization techniques  for  computation.

{\bf Estimating the individual parameters $\boldsymbol{\theta}_i$.}
The more  common task in Empirical Bayes is the point estimation of the individual parameters $\theta_i$,  where $Y_i \sim f(y|\theta_i)$, $i=1,...,n$. However, in this paper, we emphasize the task of estimating the mean of 
various functions $\eta(\theta)$ under the mixture $G$. Indeed, the more common task is problematic, without an identifiability assumption.

NPEB estimation of the individual parameter $\theta_i$, $i=1,...,n$, based on GMLE $\hat{G}$, is through:
$$\hat{\theta}_i= E_{\hat{G}} (\theta_i| Y_i).$$

The difficulty under non-unique and inconsistent $\hat{G}$ is illustrated  with Example \ref{ex:first}.
Consider $\hat{G}_1$ and $\hat{G}_2$ in Example \ref{ex:first}. 
 Then: 
$$0.5= E_{\hat{G}_1} (\theta_i|Y_i=1) \neq E_{\hat{G}_2} (\theta_i|Y_i=1)=0.625.$$
making it ambiguous  which estimate should be preferred.
The above ambiguity remains regardless of how large  $n$ is.

It is worth noting that  Greenshtein and Ritov (2022), 
show the following identity: 
$$\eta_{\hat{G}}=\frac{1}{n} \sum_j E_{\hat{G}}(\theta_j|Y_j),$$ for any GMLE $\hat{G}$.

\subsection{Overview and further preliminaries} \label{sec:over}


To illustrate the key ideas, consider a simple scenario. Suppose the goal is to estimate the proportion
of unemployed individuals,
based on  a sample of  size $n$. 
Define: \newline

\hspace{0.2 in} $Y_i=1$ if individual $i$ is unemployed, $Y_i=0$ otherwise. \newline
The obvious estimator is $ (\sum Y_i)/n$. However, this estimator becomes  problematic if there are some nonresponse cases
(or missing values), particularly if  individuals that did not respond  are MNAR.
 For instance, unemployed individuals are more likely   not to respond.

Another example: Suppose $Y_i \sim f(y|\theta_i)$.  Let $I_i$  the indicator of the event:
``a response was obtained from individual $i$", let $h(Y_i)$ denote, for example,  the salary associated with 
individual  $i$.

The goal is to estimate 
$$E_{G} h(Y) =E_G E(h(Y)| \theta)=E_G \eta(\theta) \equiv \eta_G,$$
where $$ \eta(\theta)=E(h(Y)|\theta).$$ 
When there is a positive correlation, under $G$, between $I$ and $h(Y)$, a positive bias, 
of the obvious estimator, is expected.
Specifically, suppose $\sum I_i>0$; then  the naive estimator $$\frac {  \sum_i h(Y_i) I_i}{\sum I_i},$$
is clearly biased for $E_G h(Y)$.

Suppose that under the mixture $G$, $E(h(Y)|I) \perp I| \theta$,
i.e., $E_G (h(Y)|I)$ is independent of $I$ conditional on $\theta$. 
We obtain:
 $$E_G(h(Y)|I=1,\theta)=E_G(h(Y)|I=0,\theta)=E_G(h(Y)|\theta).$$
The above motivates Part i)  of Assumption \ref{as: cond}.
Consider  the following  motivating examples in  
Subsection \ref{sec:ex}. 
In those examples  we have independence of  $E_G(h(Y)|I)$ and $I$ conditional on $\theta$.




Neutralizing the correlation when conditioning on $\theta_i$, aligns with the principle  of Missing At Random (MAR),
as discussed  in Little and Rubin  (2002), where conditioning is performed  on known strata. Our situation is more complex since the ``strata" (parameters)
are unknown and unobserved. 


{\bf  Structure of the paper.}
In Section \ref{sec:uniqueness}, we establish conditions under which, even though $\hat{G}$
is not unique, $\eta_{\hat{G}}$ is unique and does not depend on the particular GMLE
$\hat{G}$.  Section \ref{sec:weak} proves, under appropriate conditions, that for   every sequence 
$\hat{G}$  of GMLE, $\eta_{\hat{G}} \rightarrow_{a.s.}  \eta_G$.  Section \ref{sec:num} presents  numerical experiments.
Section \ref{sec:weighted}  discusses preliminary ideas
for estimating  weighted averages in more complex scenarios.

\subsection{Examples.} \label{sec:ex}

The next example is studied in Greenshtein and Itskov (2014), (GI).

\begin{example} \label{ex:GI}
Consider a survey, in which each sampled item is approached up to $K$
times. If no response is obtained within $K$ attempts the outcome is recorded as ``nonresponse". The  response  $X$
takes the values, $X=x_1,...,x_S$, e.g., $x_1=\mbox{"employed"}, \; x_2=\mbox{ "unemployed"}$, as in (GI). 
Let $K_i$  be the number of attempts until a response, let $Y_i= (X_i, K_i)$. However, we observe
$t(Y_i)$. Here, $t(Y_i)=(X_i,K_i)$  if  $K_i \leq K$; otherwise,   $t(Y_i)=\mbox{``nonresponse"}$.
The possible observed  values  $t(Y_i)$ are $(X_i, K_i)$ and "Nonresponse", leading to
$K \times S +1$ possible  observed values in total (including nonresponse).

Each sampled item $i$ has  a random   response  governed by 
a corresponding parameter $\theta_i$, $i=1,...,n$. In the above example consider the observed number of attempts $K_i$ 
as a truncated Geometric random variable, with parameter $\pi^i$. The probability of $X_i=x_s$ under $\theta_i$ is $p^i_s, \; s=1,2,...,S$, where $\theta_i=(\pi^i, p^1_i,...,p_S^i)$. Assume that conditional on $\theta_i$, $K_i$ is independent of $X_i$.
One may further extend the model to allow $\pi^i=(\pi_1^i,...,\pi^i_K)$,
where $\pi^i_k$ is the probability under $\theta_i$, of item $i$ responding at the $k'th$ attempt.

Under this setup, the goal is to estimate $E_G \eta(\theta)$ where $\eta(\theta_i)=(p^i_1,...,p^i_S)$. This provides an estimate of the population  proportion of items with $X=x_s, \; s=1,...,S$, e.g., the proportion of unemployed individuals in the population (assuming  the original sample including the non-response items
is representative).

\end{example}
The following example is studied in Greenshtein and Ritov (2022), (GR).

\begin{example} \label{ex:GR}
a) 
To estimate the population's proportion of (say) unemployed, a sample  is designed
in $n$ ``small areas/ strata".  Strata are chosen to be ``small" to approximate
the ``Missing At Random"  assumption conditional on a stratum. 
A sample of size $\kappa$ is taken from each of the $n$ strata. Let $\kappa_i$ be the number of responses 
in stratum $i$, $i=1,...,n$. The possible  observed outcomes are $(X_i, \kappa_i)$, $i=1,...,n$, where $X_i$ is the number of unemployed among the $\kappa_i$ responders, while $\kappa_i=0,...,\kappa,$ is the number of responders in stratum $i$.

The possible observed outcomes are $(X_i,\kappa_i)$, where  $X_i$ is the number of unemployed individuals among the 
$\kappa_i$ responders, and $\kappa_i=0,...,\kappa$ represents the number  of responders in stratum $i$.

 Assume  the conditional 
distribution of $X_i$  given $\kappa_i$ is binomial $B(\kappa_i, p_i)$, while $\kappa_i \sim B(\kappa,  \pi_i)$.
 
Define  $\theta_i=(\pi_i,p_i)$,
and $\eta(\theta_i)= p_i$.  The population's unemployed proportion equals $E_G \eta(\theta)$   (assuming strata are of equal size) .

The difficulty  arises when, for some strata $i$, $\kappa_i=0$. In such cases, the obvious estimator 
$\frac{1}{n} \sum_i \frac{X_i}{\kappa_i}$ cannot be applied. 

In terms of  the formulation in the previous subsection think of $\kappa$ as a  truncation of a random variable $S_i \geq \kappa$,
where  samples in stratum $i$ proceeds $S_i$ times until at least one  response is obtained. In particular if
$\kappa_i>0$, $S_i=\kappa$. Let $\tilde{\kappa}_i$ be the number responses among the sampled $S_i$
 $\tilde{X}_i$ the number of unemployed  among $S_i$. Let $Y_i=(\tilde{X}_i, \tilde{\kappa}_i)$. Let 
$t(Y_i)=Y_i$  if $S_i \leq \kappa$; otherwise, $t(Y_i)={\cal I}(S_i>\kappa)$, $\cal{I}$ is an indicator.

b) A similar setup is suitable in observational studies, where $\kappa_i$,   the number of observed individuals in 
stratum $i$ within one unit of time, follows a  $Poisson(\lambda_i)$ distribution. This setup occurs, for example, when  estimating
the spread of a disease, based on the proportion, $p_i$, of infected individuals in each stratum, $i$, $i=1,...,n$.

The estimator is  based on a ``convenience sample" from small strata, of people who underwent
(possibly unrelated) blood tests.
The available data  include the number of tested  individuals  in each stratum and the corresponding test results.
Strata are chosen  to be ``small"  so  that the sampled/tested
individuals in a stratum, are  reasonably representative of non-sampled individuals.

Here the parameter of interest is 
$$\theta_i=(\lambda_i,p_i),  \; \eta(\theta_i)=p_i.$$

As before, a difficulty arises when $\kappa_i=0$ for some $i$.

Suppose in each stratum the time of collecting data is one unit. As in the previous example, we may consider
 collecting data for $T_i \geq 1$ units until  the number  of observed items in stratum $i$ is at least one. Again  we observe only
the data that reached until $T_i=1$. 

Note, in the  current case, where $\kappa_i$ is distributed  Poisson, (GR) proved  that $\hat{G}^n \Rightarrow G$. Thus, the good simulation and real data analysis results, reported in (GR), are less surprising.

\end{example}

\bigskip
 \section{On the  Asymptotic Uniqueness of ${\eta}_{\hat{G}}$}. \label{sec:uniqueness}

In the remainder of the paper $\hat{G}=\hat{G}^n$ denotes  a GMLE based on $t(Y_1),...,t(Y_n)$. When $\hat{G}$
is not unique, we refer to {\it any} GMLE, and any corresponding estimator 
$\eta_{\hat{G}}$.

\subsection{Uniqueness of  $f_{\hat{G}} (y_i)$ for the observed $y_i$.} 

To simplify notations, in this subsection we  do not distinguish between $Y_i$ and $t(Y_i)$, and
$Y_i$ denotes observation $i$.

Given observed $Y_1=y_1,..., Y_n=y_n$, let $\hat{G}_1$ and $\hat{G}_2$ be two GMLEs. Then:

\begin{lemma} \label{lem:main}
For each   $y_i$, $i=1,...,n$, 
\begin{equation} \label{eq:lem1}
f_{\hat{G}_1}(y_i)=f_{\hat{G}_2}(y_i).
\end{equation}
\end{lemma}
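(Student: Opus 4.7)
The plan is to exploit the concavity of the log-likelihood functional on the convex set of mixing distributions, combined with the strict concavity of the logarithm. This is the standard route for uniqueness-of-marginal results for GMLE/NPMLE.

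First I would observe that the set $\{G\}$ of probability distributions on $\Theta$ is convex, and that the map $G \mapsto f_G(y)$ is affine: for any $\lambda \in (0,1)$,
\begin{equation*}
f_{\lambda \hat{G}_1 + (1-\lambda)\hat{G}_2}(y) \;=\; \lambda f_{\hat{G}_1}(y) + (1-\lambda) f_{\hat{G}_2}(y).
\end{equation*}
Define $\hat{G}_\lambda = \lambda \hat{G}_1 + (1-\lambda)\hat{G}_2 \in \{G\}$, and the log-likelihood $L(G) = \sum_{i=1}^n \log f_G(y_i)$. Since $\hat{G}_1,\hat{G}_2$ are both GMLEs, $L(\hat{G}_1)=L(\hat{G}_2)$ and this common value is finite, so $f_{\hat{G}_j}(y_i)>0$ for every $i$ and $j=1,2$ (otherwise some term would be $-\infty$ while a strictly better competitor exists among the maximizers).

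Next, I would apply the strict concavity of $\log$ pointwise. For each $i$,
\begin{equation*}
\log f_{\hat{G}_\lambda}(y_i) \;\geq\; \lambda \log f_{\hat{G}_1}(y_i) + (1-\lambda)\log f_{\hat{G}_2}(y_i),
\end{equation*}
with strict inequality whenever $f_{\hat{G}_1}(y_i) \neq f_{\hat{G}_2}(y_i)$. Summing over $i$ gives
\begin{equation*}
L(\hat{G}_\lambda) \;\geq\; \lambda L(\hat{G}_1) + (1-\lambda) L(\hat{G}_2) \;=\; L(\hat{G}_1),
\end{equation*}
again strict if equality (\ref{eq:lem1}) fails for even a single index $i$. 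But strict inequality would contradict the maximality of $\hat{G}_1$, so the pointwise equality $f_{\hat{G}_1}(y_i)=f_{\hat{G}_2}(y_i)$ must hold for every observed $y_i$.

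I do not anticipate a real obstacle here; the argument is essentially a one-paragraph concavity calculation. The only small point to check carefully is that the log-likelihoods are finite (so that the inequality does not involve $-\infty$ on both sides trivially), which follows from the existence of at least one $G$ placing positive marginal mass on each observed value, for instance the empirical mixture suggested by either GMLE itself.
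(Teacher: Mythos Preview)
Your proof is correct, and it rests on the same core idea as the paper's: pass to the convex combination $G_\lambda = \lambda \hat{G}_1 + (1-\lambda)\hat{G}_2$ and exploit concavity of the log-likelihood along this segment. The execution, however, is different and somewhat more elementary. The paper argues that since every $G_\lambda$ is a maximizer, the derivative
\[
\frac{d}{d\lambda}\sum_i \log f_{G_\lambda}(y_i)
\;=\;\sum_i \frac{f_{\hat{G}_1}(y_i)-f_{\hat{G}_2}(y_i)}{\lambda f_{\hat{G}_1}(y_i)+(1-\lambda)f_{\hat{G}_2}(y_i)}
\]
must vanish identically on $(0,1)$, and then shows by a monotonicity argument (splitting the sum according to whether $c_i=f_{\hat{G}_1}(y_i)/f_{\hat{G}_2}(y_i)$ is above or below $1$) that this is impossible unless every $c_i=1$. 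You bypass both the differentiation and the monotonicity step by applying strict concavity of $\log$ term-by-term, obtaining $L(G_\lambda)>L(\hat{G}_1)$ directly whenever a single equality fails. Your route is shorter and makes the role of strict concavity more transparent; the paper's route, on the other hand, yields the explicit derivative formula, which can be useful in related GMLE computations.
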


We also consider missing values and non-response.
Let $A^c$ the set of all  values $y$  for which the outcome is non-response; i.e., non-response occurs 
  if and only if $y \in  A^c$. 
We define: \newline
when 
\begin{equation} \label{eq:lem11}
y \in A^c,  \; \; f_{\hat{G}} (y) \equiv P_{\hat{G}} (A^c).
\end{equation}


A reviewer commented that Lemma \ref{lem:main}  is related to  Theorem 18  of Lindsay (1995). This is true; however, our proof
requires weaker assumptions  and  appears simpler and more directly related to the present paper.

Before proving the above we need the following preparations.
Recall:  
\begin{equation} \label{eqn:L}
\hat{G}=\argmax_{G} \Pi_i  f_G(Y_i) \equiv \argmax_{G} \log( \Pi_i  f_G(Y_i) ) 
\equiv \argmax_G \log(L(G)).
\end{equation}

The functional $\log(L(G))$ is concave but not necessarily strictly concave. Thus, given two maximizers, i.e., two GMLEs $\hat{G}_1, \hat{G}_2$, for any 
$\lambda \in (0,1)$ also $\lambda\hat{G}_1 +(1-\lambda)\hat{G}_2 \equiv  G_\lambda$ is a GMLE.

Denote by $ g_i= \frac{ d\hat{G}_i}{d\nu}$ the density of $\hat{G}_i$, with respect to some dominating measure $\nu$,  $ i=1,2$. 
 We obtain that: for {\it any} $\lambda \in (0,1)$
 
\begin{eqnarray}
\frac{d}{d\lambda} \log(  L(G_\lambda)) &=& \frac{d}{d\lambda}\sum_i   \log [\int (  f(y_i|\vartheta) [\lambda g_1(\vartheta)+(1-\lambda) g_2(\vartheta)] d\nu(\vartheta) ] \nonumber \\
&=& \sum_i 
\frac{  f_{\hat{G}_1}(Y_i) -  f_{\hat{G}_2}(Y_i)  } 
{ \lambda f_{\hat{G}_1}(Y_i) + (1-\lambda)  f_{\hat{G}_2}(Y_i)  }=0.  
\end{eqnarray}



Denote $c_i=\frac{ f_{\hat{G}_1}(Y_i)  }{ f_{\hat{G}_2}(Y_i) }  $.

Suppose  the above derivative is zero at $\lambda_0 \in (0,1)$. It may be verified that, unless $c_i \equiv 1$, it cannot  be zero
at $\tilde{\lambda}=\lambda_0+ \epsilon$, for sufficiently small $\epsilon >0$  so  that $\tilde{\lambda} \in (0,1)$. This 
is evident by splitting the above sum into two parts: one where $c_i \leq 1$ and another where $c_i>1$.

 \begin{proof}
The proof follows since $c_i \equiv 1$ implies  $f_{\hat{G}_1}(Y_i)  = f_{\hat{G}_2} (Y_i), \; i=1,...,n.$
\end{proof}

\subsection{Assumptions  }\label{sec:unique}


In the following, the set $A$ is abstract and general. However, it is motivated by nonresponse 
setups.

Given the response indicator $I$, define the set $A$ by
$$I \equiv I(y) \equiv I_A(y)=1 \Leftrightarrow y \in A.$$

  Let $\eta(\theta)$ be the function of interest; the goal is to estimate $E_G \eta(\theta)\equiv \eta_G$.
\begin{Assumption}\label{as: cond}
 Given a set $A$, there exists a function $h$, such that under any mixture distribution
$G^*$ on the parameter set $\Theta$: 

i) $\eta(\theta)= E_{G^*} (h(Y)|I=1,\theta)$ for every $\theta \in \Theta$.

ii) $\inf_{y \in A, \theta}  f(y|\theta)> 0.$

iii)  
a)  $f(y|\theta), \;y \in A, \; \theta \in \Theta$ 
is uniformly continuous as a bivariate function of  
$y$ and $\theta$.    

\hspace{0.2 in} b) $\sup_{y \in A,\theta} f(y| \theta)
< \infty$. 

\hspace{0.22 in} c) The function
$h(y)$
 is  bounded and continuous.

iv) There exists an integrable function under $\mu$, denoted $g$, such that $g(y)$ dominates all the functions
of $y$, in the collection $\{ f(y|\theta) \; \theta \in \Theta\}$, that is, for every $\theta \in \Theta$ and every 
$y \in A$, $g(y) \geq f(y|\theta)$. 
\end{Assumption}

Part i) in the above assumption is in the spirit of the MAR condition, conditional on $\theta$; see 
Little and Rubin (2002). Unlike 
the setup in MAR, we are able to draw inference even if, for many $\theta_i$, we have no
corresponding  observation, $Y_i$, satisfying $Y_i \in A$.
 Part ii)  and Part iii) ensure a bounded conditional density  when conditioning on $A$. 
In addition, Part (iii) implies continuity and boundedness of $\eta{\theta}$; thus it implies
convergence  of $E_{\hat{G}} \eta(\theta)$ to $E_G \eta(\theta)$ when $\hat{G}$ converges weakly to  $G$.  It also trivially
ensures   the integrability of $h(y)$. Uniform continuity is needed in the proof of Theorem  \ref{thm:main2}.
Part iv) is needed for the proof of Theorem \ref{thm:main2}.

In both of our motivating examples (Examples \ref{ex:GI} and \ref{ex:GR}), the above assumption is satisfied.
In the first case, take $A$ to be the set of all $y=(x,k)$ such  that $k \leq K$. In the second case. 
take $A$ to be the set of all 
$y=(x,{\kappa^*})$ such that $\kappa^*>0$. 

$\bullet$ In the first example, for $y \in A$, let  the corresponding $h$ be 
$$h(y)=(I(X=x_1),...,I(X=x_S)).$$

$\bullet$ In the second example, for $y \in A$, let $$h(y)=\frac{x}{\kappa^*}.$$   


 \subsection{The truncated and censored cases.}
 
For simplicity, in this subsection we impose  the following Assumption \ref{as:B}. 
(It will be removed in later subsections). 

\begin {Assumption} \label{as:B}
Given the realized observations $t(Y_1),...,t(Y_n)$, for every $y \in A$ there exists a corresponding $j=1,...,n$,
such that $t(Y_j)=y$.
\end{Assumption}

{\bf Truncated case}

In the  truncated  setup,  we do not know about observations $Y=y$  such that $y \notin A$.   As examples, consider the well-known models and problems, ``Capture- recapture",
``How many words did Shakespeare know?", ``Number of unseen  species". In those examples, we do not know about 
unobserved items.

 In the truncated setup, the corresponding densities are:
$$ \{ f^T (y|\theta) \equiv  \frac{f(y|\theta)}{E(I|\theta) }\times I(y\in A), \; \theta \in \Theta \}.$$

{\bf Censored case}

In the case of censored non-response,  for items that did not respond, we do know that their corresponding value  $y$ 
satisfies
$y \notin A$. 
Here, we consider GMLEs that also incorporate the censored information, i.e., for $y_i \notin A$
we utilize the information $y_i \in A^c$.

The corresponding  density  is:
$$ f^C(y|\theta)= f(y|\theta) \;\mbox{if} \; y \in A   \; ; \; f^C(y| \theta)= {P_\theta(A^c)} \; \mbox{if} \; y \in A^c.$$

\bigskip

Let  $\hat{G}^T$ and $\hat{G}^C$  be the GMLEs that are
obtained in the truncated  model and in the censored model respectively;
we write
 $\hat{G}^M$,  $M=T,C$. Similarly, denote by $f^M_{\hat{G}}$, $M=T,C$, the marginal densities of $Y$ under 
 the truncated and the censored cases respectively; finally
we write $f^M(y|\theta)$, $M=T,C$.

\begin{lemma} \label{lem:prop}
Under Assumptions \ref{as: cond}, \ref{as:B},
in the truncated
case  \newline
for any two GMLEs $\hat{G}_1^T, \hat{G}_2^T$, 
$$\eta_{\hat{G}_1^1}=\eta_{\hat{G}_2^T}.$$
\end{lemma}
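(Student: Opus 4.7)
The plan is to reduce $\eta_{\hat{G}^T}$ to an integral against the marginal density $f^T_{\hat{G}^T}$ restricted to $A$, and then to invoke Lemma \ref{lem:main} (applied to the truncated model) together with Assumption \ref{as:B} to conclude that this marginal is the same for any two GMLEs.

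First I would rewrite the target. By Assumption \ref{as: cond}(i), $\eta(\theta)=E(h(Y)\mid I=1,\theta)$, and under the truncated model $f^T(y\mid\theta)=f(y\mid\theta)/E(I\mid\theta)\cdot I(y\in A)$ is precisely the conditional density of $Y$ given $Y\in A$ under $\theta$. Hence for any GMLE $\hat{G}^T$,
\begin{equation*}
\eta_{\hat{G}^T}=\int_\Theta \eta(\vartheta)\,d\hat{G}^T(\vartheta)
=\int_\Theta\!\!\int_A h(y)\,f^T(y\mid\vartheta)\,d\mu(y)\,d\hat{G}^T(\vartheta).
\end{equation*}
Because $h$ is bounded (Assumption \ref{as: cond}(iii)(c)) and $f^T(\cdot\mid\vartheta)$ is a probability density, Fubini applies, yielding
\begin{equation*}
\eta_{\hat{G}^T}=\int_A h(y)\,f^T_{\hat{G}^T}(y)\,d\mu(y).
\end{equation*}
So equality of $\eta_{\hat{G}_1^T}$ and $\eta_{\hat{G}_2^T}$ reduces to showing $f^T_{\hat{G}_1^T}(y)=f^T_{\hat{G}_2^T}(y)$ for $\mu$-a.e.\ $y\in A$.

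Next I would invoke Lemma \ref{lem:main}. The lemma is a generic statement about GMLEs of a mixture family, so it applies equally well to the truncated family $\{f^T(\cdot\mid\theta):\theta\in\Theta\}$ given the observations $t(Y_1),\dots,t(Y_n)$. It gives $f^T_{\hat{G}_1^T}(y_i)=f^T_{\hat{G}_2^T}(y_i)$ at every observed $y_i$. Here is where Assumption \ref{as:B} enters decisively: it asserts that the observed sample exhausts $A$, so the equality extends from the observed points to all of $A$. Plugging this into the integral representation above gives $\eta_{\hat{G}_1^T}=\eta_{\hat{G}_2^T}$, completing the proof.

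The main obstacle I expect is essentially bookkeeping rather than a deep difficulty: one must verify that Lemma \ref{lem:main} is being applied to the correct family of densities (the truncated one, with the factor $1/E(I\mid\theta)$ absorbed into $f^T$), and that Assumption \ref{as:B} is strong enough to let the equality of $f^T_{\hat{G}_1^T}$ and $f^T_{\hat{G}_2^T}$ be lifted from the finite set of observed points to the full set $A$ appearing under the integral. In the motivating examples this is automatic because $A$ is discrete and finite, but the statement of the lemma relies on this assumption, and any attempt to relax it (as flagged for later subsections) would require a genuinely new argument, since the GMLE only constrains the mixture density at points actually observed.
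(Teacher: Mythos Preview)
Your proposal is correct and follows essentially the same route as the paper: expand $\eta_{\hat{G}^T_j}$ via Assumption~\ref{as: cond}(i) into $\int_\Theta\int_A h(y)f^T(y\mid\vartheta)\,d\mu\,d\hat{G}^T_j(\vartheta)$, apply Fubini to get $\int_A h(y)f^T_{\hat{G}^T_j}(y)\,d\mu$, and then use Lemma~\ref{lem:main} together with Assumption~\ref{as:B} to conclude that $f^T_{\hat{G}_1^T}=f^T_{\hat{G}_2^T}$ on all of $A$. Your added justification for Fubini (boundedness of $h$, $f^T(\cdot\mid\vartheta)$ a probability density) is a welcome clarification the paper leaves implicit.
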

\begin{proof}
For  $\hat{G}^T_j$, $j=1,2$ we have:

\begin{eqnarray}
\eta_{\hat{G}^T_j}&=& \int \eta(\vartheta) d\hat{G}^T_j(\vartheta)\\
                        &=& \int_\Theta \int_A h(y) \frac{ f(y|\vartheta) }{E(I|\vartheta) } d\mu d\hat{G}_j^T(\vartheta) \\
                          &=& \int_A \int_\Theta f^T(y|\vartheta) d\hat{G}^T_j(\vartheta)d\mu \\
                           &=& \int_A f_{\hat{G}^T_j}^T(y) d\mu 
\end{eqnarray}

The second equality above is by Part i) of Assumption \ref{as: cond}.
The third and fourth equalities are obtained  by interchanging the order of integration and by the definition of $f_{\hat{G}^T}$.
By Lemma \ref{lem:main} and Assumption \ref{as:B}, for every $y \in A$,
 $f_{\hat{G}^T_1}^T(y)=f_{\hat{G}_2^T}^T(y)$. The proof now follows.
\end{proof}

We are not able to prove the analog of the last lemma for the censored case. 

In the next subsection we will obtain an extension of the above result, without relying on Assumption \ref{as:B}.

\subsection{Finite support the truncated case.}

In the remainder of this section, we will  concentrate on the truncated case. The reason is that  the
established results depend
on Lemma \ref{lem:prop} that is proved for the truncated case.  We will consider the censored case
in Subsection \ref{sec:cens}


Consider setups where $A$ has  a finite support, as described  in Section \ref{sec:ex}. 
  By Part ii) of Assumption \ref{as: cond}, for sufficiently large $n$, $n=1,2,...$,  every possible outcome $y \in A$ is realized almost surely. Therefore, almost surely, Assumption \ref{as:B} is satisfied for the observed $t(Y_1),...,t(Y_n)$, for large enough $n$.


We obtain:
\begin{theorem} \label{thm:main1}
Consider the truncated case, assume the set $A$ is finite.
 Under  Assumption \ref{as: cond}, for any  two  sequences of GMLEs $\hat{G}^n_1$ and $\hat{G}_2^n$, 
almost surely
$$\lim_{n \rightarrow \infty } [E_{\hat{G}_1^n} \eta(\theta) -  E_{\hat{G}_2^n}\eta(\theta)]=0.$$
\end{theorem}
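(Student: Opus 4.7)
The key observation is that Lemma \ref{lem:prop} already delivers exact equality $\eta_{\hat{G}^T_1} = \eta_{\hat{G}^T_2}$ whenever Assumption \ref{as:B} holds for the realized sample. So the whole theorem reduces to showing that, with $A$ finite, Assumption \ref{as:B} is satisfied almost surely for all sufficiently large $n$.

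My plan is as follows. In the truncated model, the density of a single observed $t(Y_j)$ at a point $y \in A$ conditional on $\theta$ is $f^T(y\mid\theta) = f(y\mid\theta)/E(I\mid\theta)$. By Part ii) of Assumption \ref{as: cond}, $\inf_{y\in A,\theta} f(y\mid\theta) \geq \delta_0 > 0$, and trivially $E(I\mid\theta) \leq 1$, so $f^T(y\mid\theta) \geq \delta_0$ uniformly over $y \in A$ and $\theta \in \Theta$. Integrating against any mixing distribution $G$, the marginal probability $P_G(t(Y_j) = y) \geq \delta_0$ for every fixed $y \in A$. Since the $t(Y_j)$ are i.i.d., a Borel--Cantelli (or direct geometric) argument gives
\[
P_G\bigl(t(Y_j) \neq y \text{ for all } j = 1,\dots,n\bigr) \leq (1-\delta_0)^n,
\]
which is summable, so almost surely for every fixed $y \in A$ the value $y$ is eventually hit.

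Since $A$ is finite, taking a union over $y \in A$ (still a finite union of null exceptional sets) yields that, almost surely, there exists a (random) $N$ such that for all $n \geq N$ every $y \in A$ is realized by some $t(Y_j)$ with $j \leq n$. On this full-measure event, Assumption \ref{as:B} holds for $n \geq N$, and Lemma \ref{lem:prop} applies to give $E_{\hat{G}_1^n}\eta(\theta) = E_{\hat{G}_2^n}\eta(\theta)$ exactly for every such $n$. The conclusion $\lim_{n\to\infty}[E_{\hat{G}_1^n}\eta(\theta) - E_{\hat{G}_2^n}\eta(\theta)] = 0$ is then immediate, indeed with the difference identically zero for $n \geq N$.

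The only subtlety I expect is keeping track of which probability measure governs the observations: the argument needs $P_G(t(Y_j)=y) \geq \delta_0$ under the true $G$, which follows from Part ii) once one notes that the mixing step preserves the lower bound. No other step is delicate, and the finiteness of $A$ is essential because it permits the union bound over $y \in A$.
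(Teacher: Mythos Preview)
Your proposal is correct and follows essentially the same approach as the paper: the paper's proof consists of the single observation that Part~ii) of Assumption~\ref{as: cond} together with the finiteness of $A$ forces Assumption~\ref{as:B} to hold almost surely for all large $n$, after which Lemma~\ref{lem:prop} applies verbatim. You have simply made the Borel--Cantelli step explicit where the paper leaves it to the reader.
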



\begin{proof}  

 The proof follows the same reasoning as  Lemma \ref{lem:prop}, 
since Assumption \ref{as:B} is satisfied with probability 1 for  large enough $n$.

\end{proof}
\bigskip
{\bf Remark:} Note  that in the setup of Example \ref{ex:first},  for the case $\eta(\theta)=\theta^2$, the equality  stated in the above theorem does not hold. This is because Part i) of Assumption \ref{as: cond}   does not hold.
In that example the set $A$ is the  entire sample space.

\subsection{Infinite support, truncated  case.}

Recall $\hat{G} \equiv \hat{G}^n$, and we omit the superscript $n$.

\begin{theorem} \label {thm:main2}
Consider the truncated case.
Under Assumption  \ref{as: cond},  for {\it any} support size and any two sequences of GMLEs, $\hat{G}_1^T$ and $\hat{G}_2^T$  almost surely:
$$\lim_{n \rightarrow \infty}[ E_{\hat{G}_1^T} \eta(\theta) -   E_{\hat{G}_2^T} \eta(\theta) ]=0.$$

\end{theorem}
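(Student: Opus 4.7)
The plan is to work along weakly convergent subsequences and show that any two subsequential weak limits, one of $\hat{G}_1^{T,n}$ and one of $\hat{G}_2^{T,n}$, must yield the same value of $\eta$; a standard diagonal argument then forces the full-sequence difference $\eta_{\hat{G}_1^T}-\eta_{\hat{G}_2^T}$ to zero almost surely. Concretely, I would invoke Helly's selection theorem (using tightness of $\{\hat{G}^{T,n}\}$, guaranteed by the compactness-flavored portions of Assumption~\ref{as: cond}) to pass to a common subsequence $n_k$ along which $\hat{G}_j^{T,n_k}\Rightarrow G_j^\ast$ for $j=1,2$. The identity
$$\eta_{G_j^\ast} \;=\; \int_A h(y)\,f_{G_j^\ast}^T(y)\,d\mu,$$
established exactly as in the proof of Lemma~\ref{lem:prop}, reduces the theorem to showing that $f_{G_1^\ast}^T(y) = f_{G_2^\ast}^T(y)$ for $\mu$-almost every $y\in A$.

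The key input for this equality is Lemma~\ref{lem:main}, which gives $f_{\hat{G}_1^{T,n}}^T(Y_i)=f_{\hat{G}_2^{T,n}}^T(Y_i)$ at every observed sample point. Hence, for any bounded continuous test function $\psi$ on $A$,
$$\frac{1}{n}\sum_{i=1}^n \psi(Y_i)\bigl[f_{\hat{G}_1^{T,n}}^T(Y_i)-f_{\hat{G}_2^{T,n}}^T(Y_i)\bigr] \;=\; 0$$
identically in $n$. Along the subsequence $n_k$ I would pass this empirical identity to a limit using two ingredients: (a) the SLLN, since $Y_1,\dots,Y_n$ are i.i.d.\ with density $f_G^T$; and (b) convergence $f_{\hat{G}_j^{T,n_k}}^T \to f_{G_j^\ast}^T$ \emph{uniform in} $y\in A$. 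The uniform convergence follows from $\hat{G}_j^{T,n_k}\Rightarrow G_j^\ast$ together with Parts (iii) and (iv) of Assumption~\ref{as: cond}: dominated convergence (via the envelope $g$) yields pointwise convergence, and the uniform continuity of $f(y\mid\theta)$ in $(y,\theta)$ yields equicontinuity of $\{f_G^T:G\}$ in $y$, upgrading pointwise to uniform. In the limit we obtain
$$\int_A \psi(y)\bigl[f_{G_1^\ast}^T(y)-f_{G_2^\ast}^T(y)\bigr]\,f_G^T(y)\,d\mu(y)\;=\;0$$
for every bounded continuous $\psi$, so $f_{G_1^\ast}^T=f_{G_2^\ast}^T$ almost surely with respect to $f_G^T\,d\mu$. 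Because Part (ii) of Assumption~\ref{as: cond} forces $f_G^T(y)\ge c>0$ uniformly on $A$, this is equivalent to $\mu$-a.e.\ equality on $A$, as required.

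The principal obstacle I expect is cleanly justifying the uniform-in-$y$ convergence $f_{\hat{G}_j^{T,n_k}}^T\to f_{G_j^\ast}^T$: combining the uniform continuity in Assumption~\ref{as: cond}(iii-a) with the envelope (iv) and weak convergence of the mixing measures will require an Arzel\`a--Ascoli-type equicontinuity estimate carried out with some care. A secondary technical point is verifying tightness of $\{\hat{G}^{T,n}\}$ to enable Helly's theorem; this is implicit in the compactness-flavored hypotheses but may warrant an explicit hypothesis on $\Theta$ (e.g.\ compactness) to be fully rigorous.
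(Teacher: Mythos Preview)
Your route is genuinely different from the paper's, and as written it has a real gap. The paper never passes to weak limits of the mixing measures at all. Instead it works directly at finite $n$: using Assumption~\ref{as: cond}(iii)(a)--(b) and (ii), the family $\{y\mapsto h(y)f_{\hat G}^T(y):\hat G\}$ is uniformly bounded and \emph{equicontinuous} in $y$; after reducing (via the envelope $g$) to a compact $A$, one covers $A$ by finitely many sets $\Delta_1,\dots,\Delta_K$ of small diameter, waits until each $\Delta_k$ contains at least one observation $t(Y_j)$ (a.s.\ for large $n$, by Part~(ii)), invokes Lemma~\ref{lem:main} to get $h\,f_{\hat G_1^T}^T=h\,f_{\hat G_2^T}^T$ at those observed points, and then uses equicontinuity to extend the near-equality to every $y\in A$. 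Integrating gives $|\eta_{\hat G_1^T}-\eta_{\hat G_2^T}|$ small. No Helly, no subsequences, no limit measures.

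The gap in your argument is the one you flag as ``secondary'': tightness of $\{\hat G^{T,n}\}$ on $\Theta$ is \emph{not} part of Assumption~\ref{as: cond}, and nothing in the stated hypotheses forces $\Theta$ to be compact or the GMLEs to be tight. Without it, Helly's theorem is unavailable, the subsequential limits $G_j^\ast$ need not exist, and the entire chain (uniform convergence of $f_{\hat G_j^{T,n_k}}^T$, passage to the integral identity) collapses. A second, smaller issue: even granted convergence, your limit $G_j^\ast$ is sample-dependent, so the SLLN step $\frac{1}{n_k}\sum_i \psi(Y_i)f_{G_j^\ast}^T(Y_i)\to\int\psi f_{G_j^\ast}^T f_G^T\,d\mu$ is not a vanilla SLLN; you would need a uniform (Glivenko--Cantelli) statement over the equicontinuous class $\{f_{G'}^T\}$. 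That is provable, but it is extra work you do not mention. Notice that the salvage for both problems---apply Arzel\`a--Ascoli directly to the densities $f_{\hat G^{T,n}}^T$ rather than to the measures---is exactly the equicontinuity insight the paper exploits, only the paper uses it without the subsequence detour.
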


\begin{proof}
Consider  the truncated case.
Without loss of generality (w.l.o.g.), we may assume that $A$ is compact. This is because, for  any integrable function
the integral over an entire domain can be approximated arbitrarily closely by the integral over a  sufficiently 
large compact subset
of that domain. In particular, this holds for $\int_A h(y)Kg(y) d\mu$, where $g$ is as in Part iv) of Assumption \ref{as: cond}, and $K= 1/(\inf_\theta P_\theta(A))$.  By Part ii) of  Assumption \ref{as: cond}, $K<\infty$. 
Now, by dominance of $g$ the same approximating compact subset that is obtained under $Kg$, also approximates the integral
above, when  replacing $Kg$ by ${f^T_{\hat{G}^n}(y) }$.

 Let   $$\psi_{\hat{G}}(y)= h(y) f_{\hat{G}}^T(y).$$

By the uniform continuity and boundedness assumed in Assumption \ref{as: cond},
for every $\epsilon>0$ and every $\hat{G}\equiv \hat{G}^n$ there exists $\delta>0$ such that $||y-y'|| < \delta$ implies  
$|\psi_{\hat{G}^T}(y)-\psi_{\hat{G}^T}(y')| <\epsilon$, for every $n$. 
Thus, since $A$ is compact w.l.o.g., there exists a finite collection  of  sets $\Delta_1,..., \Delta_K$, 
with diameter $\delta$ and measure $\mu(\Delta_k)>0$, whose union contains $A$, such that:
if $y$ and $y'$ are in $\Delta_k$, $k=1,...,K$, then $|\psi_{\hat{G}_j^T}(y)-\psi_{\hat{G}_j^T}(y')|< \epsilon$. 
Given  $\Delta_1,...,\Delta_K$, and the true $G$,  almost surely for large enough $n$, for every $1 \leq k  \leq K$ there will be at least one realization $Y_j$, such that $t(Y_j) \in \Delta_k$.

Note that for every $j$ as above, by Lemma \ref{lem:main},  
$$\psi_{\hat{G}_1}^T(t(Y_j))= \psi_{\hat{G}_2}^T(t(Y_j)).$$

It then follows that for each $k$ and $y \in \Delta_k$ almost surely,  
$$|\psi_{\hat{G}_1}^T(y)- \psi_{\hat{G}_2}^T(y)|< 2\epsilon,$$ for any two GMLEs $\hat{G}^T_1$, $\hat{G}^T_2$,
and large enogh $n$.
From  the above we conclude that, for sufficiently large $n$ almost surely:
 $$|\eta_{\hat{G}_1}^T-\eta_{\hat{G}_2}^T| < 2\cal{K}\epsilon;$$
here ${\cal K}= \int_A 1 d\mu<\infty$ by Part ii) of Assumption \ref{as: cond}. 
The above may achieved for every $\epsilon>0$, hence the proof follows.


\end{proof}

\section{ Convergence of $\hat{\eta}_G$ to $\eta_G$, the truncated case.}\label{sec:weak}


In Kiefer and Wolfowitz (1956) and  in Chen (2017), conditions are provided under which,   $\hat{G} \equiv \hat{G}^n$ converges weakly to the true $G$ almost surely, i.e.,
$\hat{G} \Rightarrow G$. These conditions involve the identifiability assumption.

Recall  such a weak convergence would imply ${\eta}_{\hat{G}} \rightarrow_{a.s.} \eta_G$, provided that
$\eta(\theta)$ is bounded and continuous.

Our requirements in this section, which ensure
almost sure convergence of $\eta_{\hat{G}}$ to $\eta_G$, are weaker than those in the aforementioned papers. We only require the almost sure 
 {\it existence} of  a sequence $\hat{G}$ that  converges weakly to
the true mixing  distribution $G$. In particular, we do not assume identifiability.

 Define $\Gamma^n= \{ \hat{G}^n| \hat{G}^n \mbox{is a GMLE} \} \label{eqn: gamma}$.

\begin{Assumption}\label{as: 2}
Consider the truncated case.
Assume  that  there {\it exists}
a sequence $\hat{G}^n_0$, where $\hat{G}^n_0 \in \Gamma^n$ such that $\hat{G}^n_0 \Rightarrow_w G$,
where $G$ is the true mixing distribution.
\end{Assumption}  

The  reasoning and corresponding requirements for a.s.  weak convergence of
 {\it all} GMLE sequences $\hat{G}^n$ to $G$, 
are described below. In addition, we explain why our  Assumption \ref{as: 2} is weaker.

For every distribution $H$ on $\Theta$, 
by the law of large numbers:

i) $\frac{1}{n}\sum_i \log (f_H(Y_i)) \rightarrow   \; E_G \log( f_H(Y))$.

By definition:

ii) $\max_{H \in \{G\}} \frac{1}{n} \sum \log( f_H(Y_i))= \frac{1}{n} \sum \log(f_{\hat{G}}(Y_i)).$

Using a standard argument based on  Jensen's inequality:

iii) $\max_{H \in \{G\} } E_G \log (f_H(Y))= E_G \log( f_G(Y)).$

The above, together with appropriate bounds on $\{ \log(f_H(Y)),\; H \in \{ G \} \}$, the concavity of
$\log(L)$, and an {\it identifiability}
condition, ensure a.s. weak convergence of any sequence $\hat{G}^n$ of GMLEs to the true $G$.

However, under  nonidentifiability,  $\argmax_H$ in ii) and iii),  may not be unique.
Therefore, to guarantee convergence of every GMLE sequence,  $\hat{G}^n$, to the true $G$, identifiability is essential. 
(see also Greenshtein and Ritov (2022), Theorem 4). 
 
To clarify, consider Example \ref{ex:first}. Suppose the true mixing distribution $G$  is $U[0,1]$.
As $n \rightarrow \infty$, 
$$\bar{Y}_n=(\sum_i Y_i/n) \rightarrow_{a.s.} 0.5.$$
 It may be verified that:
$$\Gamma^n=\{ H | E_H \theta=\bar{Y}_n \},$$ 
where $H$ is a distribution on the interval $[0,1]$. In particular, a.s.
there exists a sequence $\hat{G}_0^n \in \Gamma^n$ that converges weakly to  the true distribution $G=U[0,1]$.
 However,
there also exists another sequence $\hat{G}_1^n$ that converges (say) to the distribution $G'$, that satisfies
 $$P_{G'}( \theta=0.5)=1.$$ 

We have demonstrated that not every sequence of GMLE converges weakly to the true $G$;
this is due to the  non-identifiability in the last example.

We now summarize the key points.
In the aforementioned   papers, conditions are provided to ensure that 
almost every sequence of GMLEs  converges weakly to the true $G$. For such a result, the identifiability assumption is crucial.
In contrast, for our consistency result in this section, we  require only that  there exists of a GMLE sequence $\hat{G}^n$ that converges
weakly 
to the true $G$. This condition   is often satisfied without an identifiability assumption, making it a weaker requirement.

\begin{theorem}\label{thm:main3}
Consider  the truncated case.
Under  Assumption \ref{as: cond} and Assumption
 \ref{as: 2}, it follows that for every
sequence $\hat{G}^n$, $n=1,2,...$, $\hat{G}^n \in \Gamma^n$
$$\hat{\eta}_{\hat{G}^n}  \rightarrow_{a.s.} \eta_G.$$
\end{theorem}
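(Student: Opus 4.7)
The plan is to combine Assumption \ref{as: 2} with Theorem \ref{thm:main2} through an interpolation argument: any GMLE sequence is asymptotically indistinguishable from a ``good'' GMLE sequence that converges weakly to $G$, so all GMLE sequences inherit the consistency of the good one. First I would observe that $\eta(\theta)$ is a bounded, continuous function on $\Theta$. Boundedness is immediate from Part iii)c) of Assumption \ref{as: cond}, since $|\eta(\theta)| \le \|h\|_\infty$. For continuity, write
\[
\eta(\theta) \;=\; E\bigl(h(Y)\mid I=1,\theta\bigr) \;=\; \frac{\int_{A} h(y) f(y\mid\theta)\, d\mu(y)}{\int_A f(y\mid\theta)\,d\mu(y)}.
\]
The uniform continuity of $f(y\mid\theta)$ in $(y,\theta)$ from Part iii)a), combined with the integrable dominating function $g$ from Part iv) and the boundedness of $h$, gives continuity of both numerator and denominator via dominated convergence; the denominator is bounded away from zero by Part ii).

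Next, by Assumption \ref{as: 2}, there exists a sequence $\hat{G}_0^n\in\Gamma^n$ with $\hat{G}_0^n \Rightarrow G$ almost surely. Since $\eta$ is bounded and continuous, the Portmanteau theorem gives
\[
E_{\hat{G}_0^n}\eta(\theta) \;\longrightarrow\; E_G\eta(\theta) \;=\; \eta_G \qquad \text{a.s.}
\]

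Now let $\hat{G}^n$ be an arbitrary sequence in $\Gamma^n$. Applying Theorem \ref{thm:main2} to the two GMLE sequences $\hat{G}^n$ and $\hat{G}_0^n$ yields, almost surely,
\[
\bigl|E_{\hat{G}^n}\eta(\theta) - E_{\hat{G}_0^n}\eta(\theta)\bigr| \;\longrightarrow\; 0.
\]
Combining this with the previous display gives $E_{\hat{G}^n}\eta(\theta)\to\eta_G$ a.s., which is exactly the desired consistency statement. I do not anticipate a genuine obstacle here, since the heavy lifting has already been done in Theorem \ref{thm:main2}; the only point requiring a bit of care is verifying that Assumption \ref{as: cond} delivers the continuity and boundedness of $\eta$ needed to convert weak convergence of $\hat{G}_0^n$ into convergence of the functionals $E_{\hat{G}_0^n}\eta(\theta)$.
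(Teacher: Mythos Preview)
Your proof is correct and follows essentially the same route as the paper: use Assumption~\ref{as: 2} together with boundedness and continuity of $\eta$ to get $\eta_{\hat G_0^n}\to\eta_G$ a.s., then invoke Theorem~\ref{thm:main2} to transfer this to any GMLE sequence. The only difference is that you spell out the verification that $\eta$ is bounded and continuous, whereas the paper treats this as already established from Assumption~\ref{as: cond}.
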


\begin{proof}
 Since $\eta(\theta)$ is assumed bounded and continuous, weak convergence implies $\eta_{\hat{G}^n_0} \rightarrow_{a.s.} \eta_G$.
The proof follows by Theorem \ref{thm:main2}, as for any sequence $\hat{G}^n \in \Gamma^n$,  $\eta_{\hat{G}^n_0 } - \eta_{\hat{G}^n} \rightarrow_{a.s.} 0$. 
\end{proof}

\bigskip


\subsection{The censored cae.}\label{sec:cens}

There is a somewhat awkward way to treat the censored case and  obtain an analogous result to
Theorem \ref{thm:main3}. This is by a reduction of  the  censored case  to the truncated case
simply by ignoring the censored information.

\bigskip

The following is a heuristics argument to explain why we may expect that also for certain  $\hat{G}^C$,
$\eta_{\hat{G}^C}$  should asymptotically approximate $\eta_{G}$ well. Consider an EM algorithm that starts
with a prior (or initial guess) as any $\hat{G}^T_0$.  It will converge to  a  mixture $\hat{G}^C_0$ with a minimal 
KL distance between $f_{\hat{G}^T_0}$ and  $\{f_{\hat{G}^C}, \hat{G}^C \mbox {is a GMLE} \} $,
Now,  since $\eta_{\hat{G}^T_0}$   approximates $\eta_G$ well, we might expect the same from 
$\eta_{\hat{G}^C_0}$.

\section{Numerical Example} \label{sec:num}

The  following examples  are taken directly from Greenshtein and Ritov (2022),  including our remark about our surprise
(at the time)  regarding
the good performance of the estimators ${\eta}_{\hat{G}}$. The reported simulations  in the following tables are
based on averages of 50 repetitions in each configuration. The GMLE was computed under the censored  setup.

We study the  model  described in Part a) of Example \ref{ex:GR}.
In the first set of simulations, the attempted sample size is $\kappa=4$,
while  $\kappa_i$ is the realized sample size from stratum $i$, 
 $\kappa_i \sim B(\kappa,\pi_i)$. Our simulated populations  have two types of strata, with
500 strata of each type. For 500 strata  $\pi_i=p_i=0.5-\del$, while for the other 500 strata, $\pi_i=p_i=0.5+\del$.
 The  simulation
 results are reported in Table \ref{tab:Binom1}.

 Note,   $\eta_G=0.5$ throughout the three configurations.  
In the table we  present the mean  and the standard deviation of 
the following estimators. The  Naive estimator  is based on 
estimation of $p_i$ by $\frac{X_i}{\kappa_i}$ when $\kappa_i>0$, and ignores the cases where 
$\kappa_i=0$, in a missing-at-random manner. The estimator referred to as GMLE is 
${\eta}_{\hat {G}}$.

\begin{table}
\caption{The mean and standard deviation of two estimators. Binomial Simulation. $\kappa = 4$ and $\pi_i=p_i=0.5\pm \del$. }\label{tab2}
\label{tab:Binom1}
\begin{center}
\begin{tabular}{ |c||c|c| }
 \hline
    $\del$ & Naive & GMLE \\
	\hline \hline	
 $0.3$ & {\bf 0.559}, (0.012) & {\bf 0.502}, (0.014) \\
 $0.2$ & {\bf 0.522}, (0.011) & {\bf 0.504}, (0.012) \\
 $0.1$ & {\bf 0.504}, (0.010) & {\bf 0.501}, (0.010) \\
 \hline
\end{tabular}
\end{center}
\end{table}

The final set of  simulations is summarized in Table \ref{tab:Binomial2}, where
we study Binomial sampling with various values of $\kappa$, 
$\kappa=1,\dots,5$. Again, the strata are divided evenly into two types:
for 500 strata, 
$p_i$ and $\pi_i$ are sampled independently from  $ U(0.1,0.6)$, while for the remaining 500 strata they are \iid from $U(0.4,0.9)$. Note, throughout  the five configurations $\eta_G=0.5$.

\begin{table}
\caption{Binomial Simulations with continuous $G$. $\kappa$=1,2,3,4,5.}\label{tab3}
\label{tab:Binomial2}
\begin{center}
\begin{tabular}{ |c||c|c| }
 \hline
    $\kappa$ & Naive & GMLE \\
	\hline \hline	
 $1$ & {\bf 0.544}, (0.019) & {\bf 0.530}, (0.015) \\
 $2$ & {\bf 0.528}, (0.014) & {\bf 0.502}, (0.021) \\
 $3$ & {\bf 0.522}, (0.014) & {\bf 0.498}, (0.022) \\
 $4$ & {\bf 0.517}, (0.012) & {\bf 0.499}, (0.020) \\
 $5$ & {\bf 0.512}, (0.009) & {\bf 0.501}, (0.013) \\
 \hline
\end{tabular}
\end{center}
\end{table}

{\it It is surprising how well the GMLE performs well 
already for $\kappa=2,3$, in spite of the
non-identifiability of $G$ and the
inconsistency of the (non-unique) GMLE $\hat{G}$ as an estimator of $G$.

In (GR), the  computation of the GMLE is described. We employed a dense grid of parameters, and applied the EM algorithm. Due to the nonidentifiability,  the algorithm may converge to different GMLEs. In each simulation run, we  used whichever distribution the algorithm converged to.

In the preceding discussion and simulations we concentrated on  cases  where  the observations have a finite number of possible values. Those are interesting cases in the context of this paper,
since nonidentifiability is implied.
Further simulations  and real data examples may be found in the aforementioned papers (GI)  and (GR). In particular in (GR)   real data are analyzed under a Poisson model, where identifiability is implied. In that example the task is to estimate the proportion of households  that own  the unit they live in, based on
a sample of a Poisson size from various Statistical-Areas. The complication is that in some areas the Poisson variable has a value  of 0; thus the ideas in the current paper
were applied.

\section{ Preliminary Ideas on Weighted Averages and  Incorporating Covariates.} \label{sec:weighted}


It may be of  interest to estimate
weighted averages  for example when the strata are not of equal size, and the population average is not a simple average of the strata's averages. 
 Similarly, this  arises when the sampling of strata or of individuals, is not
performed with
equal probabilities.
Suppose the relative weights are $\gamma_1, ... \gamma_n$, and it is desired to estimate $\sum \gamma_i \eta(\theta_i)$, e.g., population average rather than simple average of strata's averages.

We now sketch a plausible approach. The approach is sketched  without claiming
 consistency or any optimality property. 
Following this, we outline how to extend  the ideas of  Empirical Bayes to cases where covariates are present.
Note  that  when covariates are introduced exchangeability is lost, and consequently, the appeal of Empirical Bayes is reduced.
The sketch provided is informed by the previous sections and their corresponding assumptions.

The idea is to introduce additional random variables  and  corresponding parameters that will incorporate the weights
$\gamma_i$.  We demonstrate this through Example 3,
Part a).

Suppose the size of stratum $i$ is $N_i$.  Following a super-population model, we treat $N_i$ as
a realization of a  $Poisson(\lambda _i)$ random variable. 
Hence, we observe $Y_i=(X_i,\kappa_i, N_i)$.

Now, conditional  on $\theta_i=(\pi_i, p_i,\lambda_i)$,
$(X_i,\kappa_i)$ follow the same  distribution as before,  while $N_i \sim Poisson(\lambda_i)$ is independent of 
$(X_i,\kappa_i)$. We define  $\eta(\theta_i)= p_i \lambda_i$. 

The  above approach might be helpful more generally to handle Empirical Bayes problems  involving covariates. 
By treating the covariates as realizations
from a super-population, distributed according to some parametric family, we create 
``apparent exchangeability" among the observations $Y_i$. This, in turn, strengthens the  appeal of the NPEB approach.

\newpage

{\bf \Large References:}

\begin{list}{}{\setlength{\itemindent}{-1em}\setlength{\itemsep}{0.5em}}

\item
Chen, J. (2017).  ``Consistency of the MLE under Mixture Models.'' Statist. Sci. 32 (1) 47 - 63.
\item
Eckles, D., Ignatiadis, N., Wager, S., and Wu, H. (2025). Noise-induced randomization in regression discontinuity designs. Biometrika  112(2).
\item
Dobronyi, C.,  Gu, J., and K. I. Kim (2021). Identification of dynamic panel logit models with fixed effects.  arXiv:2104.04590 (econ)
\item
Efron, B. (2012). Large scale inference: Empirical Bayes  methods for estimation , testing and prediction. Cambridge University Press.
\item
Efron, B. (2014). Two modeling strategies for empirical Bayes estimation. Statistical science: a
review journal of the Institute of Mathematical Statistics, 29(2):285.
\item
Greenshtein, E. and Itskov, T (2018), Application of Nonarametric  Empirical
Bayes to Treatment of Non-Response. {\it Statistica Sinica} 28 (2018), 2189-2208.
\item
Greenshtein, E. and Ritov, Y. (2022). Generalized maximum likelihood estimation of the mean of parameters of mixtures. with applications to sampling and to observational studies. {\it EJS} 16(2): 5934-5954.
\item
Kiefer, J. and Wolfowitz, J. (1956). Consistency of the maximum likelihood estimator in the presence of infinitely many incidental parameters. {\it Ann.Math.Stat.}
27 No. 4, 887-906.
\item
Koenker, R. and Mizera, I. (2014). Convex optimization, shape constraints,
compound decisions and empirical Bayes rules. {\it JASA } 109, 674-685.
\item
Laird, N. (1978). Nonparametric maximum likelihood estimation of a mixing distribution. {\it JASA}  78, No 364, 805-811.
\item
Lindsay, B. G. (1995). Mixture Models: Theory, Geometry and Applications.
Hayward, CA, IMS.
\item
Little, R.J.A and Rubin, D.B. (2002). Statistical Analysis with Missing Data.
New York: Wiley
\item
Robbins, H. (1951). Asymptotically subminimax solutions of compound decision problems. In Proceedings of the Second Berkeley Symposium on Mathematical Statistics and
Probability, 1950 131–148. Univ. California, Berkeley. MR0044803
\item
Robbins, H. (1956). An empirical Bayes approach to statistics. In Proc. Third Berkeley
Symp. 157–164. Univ. California Press, Berkeley. MR0084919
\item
Robbins, H. (1964). The empirical Bayes approach to statistical decision problems. Ann.
Math. Statist. 35 1–20. MR0163407
\item
Robbins, H. and Zhang, C-H. (1991). Estimating multiplicative  treatment effect under biased allocation. Biometrika 78(2) 349-354.
\item
Vardi, Y. (1985). Empirical distribution in selection bias  models.  Ann.Stat. 13  (1), 178-203.
\item
Turnbull, B.  W. (1976). The empirical distribution function with arbitrarily grouped, censored and truncated data.
jrssb (38) 3, 290-295. 
\item
Zhang, C-H. (2003)  Compound decision theory  and empirical Bayes  methods. Ann. Stat.   31 (2),  379-390.

\end{list}

\end{document}